\newcommand*\bigcdot{\mathpalette\bigcdot@{.5}}
\newcommand*\bigcdot@[2]{\mathbin{\vcenter{\hbox{\scalebox{#2}{$\m@th#1\bullet$}}}}}
\newcommand{\calA}{\mathcal{A}}
\newcommand{\calD}{\mathcal{D}}
\newcommand{\calE}{\mathcal{E}}
\newcommand{\calT}{\mathcal{T}}
\newcommand{\mC}{\mathbb{C}}
\newcommand{\mD}{\mathbb{D}}
\newcommand{\mN}{\mathbb{N}}
\newcommand{\mR}{\mathbb{R}}
\newcommand{\mZ}{\mathbb{Z}}
\newcommand{\tsr}{{\textrm{tsr}}}
\newtheorem{theorem}{Theorem}[section]
\newtheorem{lemma}[theorem]{Lemma}
\newtheorem{proposition}[theorem]{Proposition}
\theoremstyle{definition}
\newtheorem{remarks}[theorem]{Remarks}
\theoremstyle{definition}
\newtheorem{definition}[theorem]{Definition}
\theoremstyle{definition}
\theoremstyle{definition}
\begin{document}

\keywords{$K$-theory, compactly supported distributions}

\subjclass[2010]{Primary 46F10; Secondary 19B10, 19K99, 46H05}

\title[Topological stable rank of $\calE'(\mR)$]{Topological stable rank of $\calE'(\mR)$}

\author[A. Sasane]{Amol Sasane}
\address{Department of Mathematics \\London School of Economics\\
    Houghton Street\\ London WC2A 2AE\\ United Kingdom}
\email{A.J.Sasane@lse.ac.uk}

\begin{abstract} 
  The set $\calE'(\mR)$ of all compactly supported distributions, with the
  operations of addition, convolution, multiplication by
  complex scalars, and with the strong dual topology is a topological
  algebra.  In this article, it is shown that the topological stable
  rank of $\calE'(\mR)$ is $2$.
\end{abstract}

\maketitle

\section{Introduction} 

\noindent The aim of this article is to show that the topological stable rank (a notion from 
topological $K$-theory, recalled below) of $\calE'(\mR)$ is $2$, where $\calE'(\mR)$ is the classical topological algebra
of compactly supported distributions, with the strong dual topology $\beta(\calE',\calE)$, 
pointwise vector space operations, and convolution taken as multiplication.

We recall some key notation and facts about $\calE'(\mR)$ in
Section~\ref{section_Eprime} below, including its strong dual topology
$\beta(\calE',\calE)$, and in Section~\ref{section_tsr}, we will
recall the notion of topological stable rank of a topological algebra.

We will prove our main result, stated below, in Sections
\ref{section_geq_2} and \ref{section_leq_2}.

\begin{theorem}
\label{main_thm}$\;$

\noindent 
Let $\calE'(\mR)$ be the algebra of all compactly supported
distributions on $\mR$, with
\begin{itemize}
\item pointwise addition, and pointwise multiplication by complex scalars,
\item convolution taken as the multiplication in the algebra, and
\item the strong dual topology $\beta(\calE',\calE)$.
\end{itemize}
Then the topological stable rank of $\calE'(\mR)$ is equal to $2$.
\end{theorem}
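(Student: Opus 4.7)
The plan is to establish separately the two inequalities $\tsr(\calE'(\mR)) \geq 2$ and $\tsr(\calE'(\mR)) \leq 2$.

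For the lower bound, I would first identify the group of convolution-invertible elements in $\calE'(\mR)$. If $f*g = \delta$, then the Titchmarsh convolution theorem (in its distributional form due to Lions) gives $\operatorname{conv}(\operatorname{supp} f) + \operatorname{conv}(\operatorname{supp} g) = \{0\}$, so each support reduces to a single point, say $\{a\}$ and $\{-a\}$. The structure theorem for distributions supported at a point writes $f$ and $g$ as finite linear combinations of derivatives of shifted delta functions; expanding $f*g$ and matching with $\delta$ forces all higher-order terms to vanish, leaving $f = c\,\delta_a$ and $g = c^{-1}\,\delta_{-a}$ for some $c \in \mC^{\times}$. Thus the invertible group is the two-real-parameter family $\{c\,\delta_a : c \in \mC^{\times},\, a \in \mR\}$. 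I would then exhibit a specific distribution---for instance $\delta_1 + \delta_2$---together with a basic $\beta(\calE',\calE)$-neighborhood of it missing all such invertibles, constructed by choosing two test functions in a suitable bounded subset of $\calE(\mR)$ with incompatible evaluation patterns at $1$ and $2$. This yields $\tsr(\calE'(\mR)) \geq 2$.

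For the upper bound, my approach invokes the Ehrenpreis--Malgrange Bezout theorem for $\calE'(\mR)$: a pair $(f,g) \in \calE'(\mR)^2$ generates $\calE'(\mR)$ as a convolution ideal if and only if the Fourier--Laplace transforms $\widehat{f}, \widehat{g}$ (entire of exponential type by Paley--Wiener) satisfy
\[
|\widehat{f}(z)| + |\widehat{g}(z)| \;\geq\; C\,(1+|z|)^{-N}\,e^{-A|\operatorname{Im} z|} \qquad (z \in \mC)
\]
for some constants $C, N, A > 0$. Density of coprime pairs then amounts to: for any $(f,g)$ and any basic $\beta(\calE',\calE)$-neighborhood, producing a pair therein satisfying this lower bound. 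I would perturb as $f' := f + \epsilon_1\,\delta$ and $g' := g + \epsilon_2\,\delta_a$ for small $\epsilon_1, \epsilon_2 > 0$ and a real parameter $a$; such perturbations are small in $\beta(\calE',\calE)$ uniformly over any bounded $B \subset \calE(\mR)$, since the evaluation maps $\varphi \mapsto \varphi(0),\, \varphi(a)$ are bounded on $B$. After the perturbation, $\widehat{f'}(z) = \widehat{f}(z) + \epsilon_1$ and $\widehat{g'}(z) = \widehat{g}(z) + \epsilon_2 e^{-iaz}$; a generic-avoidance argument using the discreteness of the zero set of $\widehat{f'}$ shows that, outside a meagre set of parameters $(\epsilon_1, \epsilon_2, a)$, the pair has no common zero in $\mC$.

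The main technical obstacle is to upgrade this no-common-zero condition to the quantitative corona-type lower bound required by the Bezout theorem; indeed, not every coprime pair of Paley--Wiener functions satisfies it, as classical Diophantine examples (e.g.\ $\widehat{f}(z) = 1 - e^{-iz}$ and $\widehat{g}(z) = 1 - e^{-iaz}$ for a Liouville $a$) show. I expect to handle this using H\"ormander's $\bar\partial$-technique in weighted $L^2$-spaces on $\mC$, with plurisubharmonic weights adapted to the exponential-type and polynomial Paley--Wiener growth, so as to construct the Bezout inverses $h, k \in \calE'(\mR)$ directly after perturbation. Alternatively, the explicit structure of $\widehat{f'}$ (a constant shift) and $\widehat{g'}$ (an exponential shift) may permit a direct estimate in horizontal strips $\{|\operatorname{Im} z| \leq R\}$, yielding the required lower bound by elementary complex-analytic arguments for generic parameters.
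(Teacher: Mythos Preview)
Your lower-bound argument coincides with the paper's alternative proof: identify $U_1(\calE'(\mR))=\{c\,\delta_a:c\in\mC^\times,\ a\in\mR\}$ via the theorem of supports and the structure theorem for point-supported distributions, then exhibit something outside the closure. The paper uses $\delta_0'$ and argues through sequential limits rather than building an explicit basic neighborhood around $\delta_1+\delta_2$, but the content is the same and your version would go through.

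For the upper bound your route diverges sharply from the paper's, and the obstacle you flag is a genuine gap that your proposal does not close. The paper never invokes the corona-type characterisation or $\bar\partial$-methods. Instead it passes through the subalgebras $\mathbf{D}_n=\mathrm{span}\{\delta_{k/n}:k\in\mZ\}$ of finite Dirac combs: these are sequentially dense in $\calE'(\mR)$ (mollify, then replace the resulting $C^\infty_c$ function by a Riemann sum), and the map $\delta_{1/n}\mapsto z$ identifies $\mathbf{D}_n$ with the Laurent polynomial ring $\mC[z,z^{-1}]$. Coprimeness of polynomials is generic---nudge one root to destroy any common factor---and a B\'ezout identity in $\mC[z,z^{-1}]$ transports verbatim to a convolution B\'ezout identity in $\mathbf{D}_n\subset\calE'(\mR)$. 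This is elementary and entirely avoids the quantitative lower bound.

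Your scheme, by contrast, stays in the analytic picture and must confront exactly the phenomenon your Liouville example illustrates: absence of common zeros of $\widehat{f'},\widehat{g'}$ is strictly weaker than the Ehrenpreis--H\"ormander bound, and nothing about the specific perturbations $f+\epsilon_1\delta_0$, $g+\epsilon_2\delta_a$ forces that bound for generic $(\epsilon_1,\epsilon_2,a)$. The $\bar\partial$ suggestion is not yet a plan---weighted $L^2$ estimates solve $\bar\partial u=v$ with growth control, but converting that into Paley--Wiener B\'ezout data for a \emph{generic} perturbation of an \emph{arbitrary} pair is itself the density statement you are trying to prove. The ``direct estimate in strips'' alternative faces the same issue: on $\{|\mathrm{Im}\,z|\le R\}$ the zero set of $\widehat{f'}$ is infinite, and you have supplied no mechanism preventing $|\widehat{g'}|$ from decaying super-polynomially along it. As written, the upper bound is not established; the paper's Dirac-comb reduction is both simpler and complete.
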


\section{The topological algebra $\calE'(\mR)$}
\label{section_Eprime}

\noindent
For background on topological vector spaces and distributions, we
refer to \cite{Bar}, \cite{Hor}, \cite{Rud}, \cite{Sha}, \cite{Sch} and \cite{Tre}.

Let $\calE(\mR)=C^\infty(\mR)$ be the space of functions
$\varphi:\mR\rightarrow \mC$ that are infinitely many times
differentiable.  We equip $\calE(\mR)$ with the topology of uniform
convergence on compact sets for the function and its derivatives. This is
defined by the following family of seminorms: for a compact
subset $K$ of $\mR$, and $M\in \{0,1,2,3\cdots\}=\mZ_{ \tiny \small\scriptscriptstyle{\geq 0}}$, we define
$$
p_{\scriptscriptstyle{K,M}}(\varphi)=\sup_{0\leq m\leq M}\sup_{x\in K}|\varphi^{(m)}(x)|\;\;\textrm{ for }\varphi\in \calE(\mR).
$$
The space $\calE(\mR)$ is 
\begin{itemize}
 \item metrizable,
 \item a Fr\'echet space, and 
 \item a Montel space;
\end{itemize}
see e.g. \cite[Example 3, p.239]{Hor}.

\smallskip 

\noindent 
By a topological algebra, we mean the following:

\begin{definition}[Topological algebra]$\;$\label{def_top_alg}

\noindent A complex algebra $\calA$ is called a {\em topological algebra} 
if it is equipped with a topology $\calT$ making the following maps 
continuous, with the product topologies on the domains:
\begin{itemize}
\item $\calA\times \calA\owns \;(a,b)\mapsto a+b\;\in \calA$
\item $\mC\times \calA\owns \;(\lambda, a)\mapsto \lambda \cdot a \;\;\in \calA$ 
\item $\calA\times \calA\owns\; (a,b)\mapsto ab\;\!\;\;\;\;\;\in\calA$
\end{itemize}
\end{definition}

\noindent 
We equip the dual space $\calE'(\mR)$ of $\calE(\mR)$ with the strong
dual topology $\beta(\calE',\calE)$, defined by the seminorms
$$
p_{\scriptscriptstyle{B}}(T)=\sup_{\varphi\in B} |\langle T, \varphi\rangle|,
$$
for bounded subsets $B$ of $\calE(\mR)$. Then $\calE'(\mR)$, being the strong dual of the Montel space $\calE(\mR)$, is a
Montel space too \cite[5.9, p.147]{Sha}. This has the consequence that a sequence in $\calE'(\mR)$ is convergent in the 
$\beta(\calE',\calE)$ topology if and only if it is convergent in the weak dual/weak-$\ast$ topology $\sigma(\calE',\calE)$ 
of pointwise convergence on $\calE(\mR)$; see e.g. \cite[Corollary~1, p.358]{Tre}.

As usual, let $\calD(\mR)$ denote the space of all compactly supported functions from 
$C^\infty(\mR)$, and $\calD'(\mR)$ denote the space of all distributions. 
The vector space $\calE'(\mR)$ can be identified with the subspace of
$\calD'(\mR)$ consisting of all distributions having compact
support. If $\calD'(\mR)$ is also equipped with its strong dual
topology, then one has a continuous injection
 $
\calE'(\mR)\hookrightarrow \calD'(\mR). 
$ 
For $T,S\in \calE'(\mR)$, we define their convolution
$T\ast S\in \calE'(\mR)$ by
$$
\langle T\ast S,\varphi\rangle=
\Big\langle T,\;\big[\;x\mapsto  \big\langle S,\varphi (x+\bigcdot)\big\rangle \;\big]\;\Big\rangle,
\quad \varphi \in \calE(\mR).
$$
The map $\ast:\calE'(\mR)\times \calE'(\mR)\rightarrow \calE'(\mR)$ is
(jointly) continuous; see for instance \cite[Chapter VI, \S3, Theorem IV, p. 157]{Sch}.

Thus $\calE'(\mR)$, endowed with the strong dual topology, forms a topological algebra 
with pointwise vector space operations, 
and with convolution taken as multiplication. 
The multiplicative identity element is $\delta_{\scriptscriptstyle{0}}$, the Dirac delta distribution
supported at $0$. In general, we will denote by $\delta_a$ the Dirac
delta distribution supported at $a\in \mR$.

We also recall that the Fourier-Laplace 
transform of a compactly supported distribution
$T\in \calE'(\mR)$ is an entire function, given by
$$
\widehat{T}(z)=\Big\langle T, \;\big(x\mapsto e^{-2\pi i x z}\big)\;\Big\rangle\quad (z\in \mC),
$$
see e.g. \cite[Proposition~29.1, p.307]{Tre}.

\section{Topological stable rank} 
\label{section_tsr}

\noindent
An analogue of the Bass stable rank (useful in algebraic
$K$-theory) for topological rings,  called the topological stable rank, was introduced  in the seminal article 
\cite{Rie}. 

\goodbreak 

\begin{definition}[Unimodular tuple, Topological stable rank]$\;$

\noindent 
  Let $\calA$ be a commutative unital topological algebra with multiplicative identity element denoted by $1$, endowed with
  a topology $\mathcal T$. 
  
  \noindent We define $\calA^n:=\calA\times \cdots \times \calA$ ($n$ times), endowed with the product topology.
\begin{itemize}
\item (Unimodular $n$-tuple).$\;$

  \noindent Let $n\in \mN:=\{1,2,3,\cdots\}$.  We
  call an $n$-tuple $(a_1,\cdots,a_n)\in \calA^n$ {\em unimodular} if
  there exists $(b_1,\cdots,b_n)\in \calA^n$ such that the B\'ezout
  equation
 $
 a_1 b_1+\cdots+a_nb_n=1
 $ 
 is satisfied. The set of all unimodular $n$-tuples is denoted by
 $U_n(\calA)$. Note that $U_1(\calA)$ is the group of invertible
 elements of $\calA$. An element from $U_2(\calA)$ is referred to as a 
 {\em coprime} pair. It can be seen that if $U_n(\calA)$ is dense in $\calA^n$, then 
 $U_{n+1}(\calA)$ is dense in $\calA^{n+1}$. 
\item (Topological stable rank). $\;$

\noindent 
If there exists a least natural number
  $n\in \mN$ for which $U_n(\calA)$ is dense in $\calA^n$, then that $n$ is called the 
  {\em topological stable rank} of $\calA$, denoted by 
  ${\rm tsr}\;\! \calA$. If no such $n$ exists, then 
  $\tsr \calA$ is said to be infinite.
\end{itemize} 
\end{definition}

\noindent While the notion of topological stable rank was introduced in the
context of {\em Banach} algebras, the above extends this notion in a natural manner to 
topological algebras. The topological stable rank of many concrete Banach
algebras  has been determined previously in several works (e.g. \cite{DF}, \cite{Sua1}, \cite{Sua2}). In this article, we determine the topological stable rank of the classical topological algebra $\calE'(\mR)$
from Schwartz's distribution theory.

\section{$\mathrm{tsr}(\calE'(\mR))\geq 2$}
\label{section_geq_2}

\noindent The idea is that if tsr$(\calE'(\mR))$ were 1, then we could approximate any $T$ from $\calE'(\mR)$ 
by compactly supported distributions whose Fourier transform would be zero-free, and by an application of 
Hurwitz Theorem, $\widehat{T}$ would need to be zero-free too, which gives a contradiction, since we can easily choose 
$T$ at the outset to not allow this. 

\begin{proposition}
\label{prop_geq}
$\textrm{\em tsr}(\calE'(\mR))\geq 2$.
\end{proposition}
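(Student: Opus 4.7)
The plan is to argue by contradiction, following the hint given just before the proposition. Suppose $\tsr(\calE'(\mR))=1$, so that $U_1(\calE'(\mR))$ is dense in $\calE'(\mR)$. I would first pick a concrete $T\in\calE'(\mR)$ whose Fourier-Laplace transform $\widehat{T}$ is a nonzero entire function that nevertheless has at least one zero; the simplest choice is $T:=\delta_1-\delta_0$, whose transform $\widehat{T}(z)=e^{-2\pi i z}-1$ vanishes at every integer. By the assumed density, there is a sequence $(T_n)_{n\in\mN}\subset U_1(\calE'(\mR))$ with $T_n\to T$ in the strong dual topology $\beta(\calE',\calE)$.

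The easy observation is that invertibility under convolution forces $\widehat{T_n}$ to be zero-free: if $T_n\ast S_n=\delta_{\scriptscriptstyle{0}}$ for some $S_n\in\calE'(\mR)$, then the multiplicativity of the Fourier-Laplace transform gives $\widehat{T_n}(z)\,\widehat{S_n}(z)=1$ for every $z\in\mC$, so $\widehat{T_n}$ never vanishes on $\mC$. Once one knows that $\widehat{T_n}\to\widehat{T}$ uniformly on compact subsets of $\mC$, Hurwitz's theorem will force the limit $\widehat{T}$ to be either identically zero or itself zero-free, contradicting the choice of $T$.

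The main technical step, and the chief obstacle, is to upgrade the convergence $T_n\to T$ in $\beta(\calE',\calE)$ to locally uniform convergence of the entire functions $\widehat{T_n}$. I would do this by verifying that for each compact $K\subset\mC$, the family
$$
B_K:=\{\varphi_z : z\in K\}, \quad\text{where}\quad \varphi_z(x):=e^{-2\pi i x z},
$$
is a bounded subset of $\calE(\mR)$. Indeed, for any compact $K'\subset\mR$ and any $M\in\mZ_{\tiny \small\scriptscriptstyle{\geq 0}}$, the identity $\varphi_z^{(m)}(x)=(-2\pi i z)^m e^{-2\pi i x z}$ yields $\sup_{z\in K}p_{\scriptscriptstyle{K',M}}(\varphi_z)<\infty$. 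Then, directly from the definition of the strong dual seminorms,
$$
\sup_{z\in K}\big|\widehat{T_n}(z)-\widehat{T}(z)\big|\;=\;p_{\scriptscriptstyle{B_K}}(T_n-T)\;\longrightarrow\;0,
$$
which is precisely uniform convergence of $\widehat{T_n}$ to $\widehat{T}$ on $K$. Applying Hurwitz's theorem to the zero-free entire functions $\widehat{T_n}$ and observing that $\widehat{T}(z)=e^{-2\pi i z}-1$ is neither identically zero nor zero-free then produces the desired contradiction and proves $\tsr(\calE'(\mR))\geq 2$.
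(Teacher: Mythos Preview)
Your argument follows the same overall strategy as the paper's proof---choose a $T$ whose transform has zeros, approximate by invertibles, pass to transforms, and invoke Hurwitz---and your route to locally uniform convergence is in fact cleaner than the paper's: observing that $B_K=\{\varphi_z:z\in K\}$ is bounded in $\calE(\mR)$ and then reading off $\sup_{z\in K}|\widehat{T_n}(z)-\widehat{T}(z)|=p_{\scriptscriptstyle{B_K}}(T_n-T)\to 0$ directly from the definition of $\beta(\calE',\calE)$ bypasses the Banach--Steinhaus/normal-family/Montel argument that the paper carries out.

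There is, however, one gap. You write ``By the assumed density, there is a sequence $(T_n)_{n\in\mN}\subset U_1(\calE'(\mR))$ with $T_n\to T$,'' but density in a non-metrizable space does not in general yield a convergent \emph{sequence}---one is only guaranteed an approximating net---and $(\calE'(\mR),\beta(\calE',\calE))$ is not metrizable. The paper addresses exactly this point by invoking the fact that $\calE'(\mR)$ is a sequential space (since $\calE(\mR)$ is Fr\'echet--Montel), so that density does imply sequential density. Alternatively, your own set-up lets you avoid sequences entirely: since $p_{\scriptscriptstyle{B_K}}$ is one of the defining seminorms of $\beta(\calE',\calE)$, for every $\epsilon>0$ the set $\{S:p_{\scriptscriptstyle{B_K}}(S-T)<\epsilon\}$ is a neighbourhood of $T$ and hence meets $U_1(\calE'(\mR))$; taking $K$ to contain a small circle about a zero of $\widehat{T}$ and applying Rouch\'e directly then gives the contradiction without ever forming a sequence or citing Hurwitz.
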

\begin{proof}
Suppose on the contrary that $\tsr(\calE'(\mR))=1$. Let 
$$
T=\frac{\delta_{\scriptscriptstyle{-1}}-\delta_{\scriptscriptstyle{1}}}{2i}\in \calE'(\mR).
$$
By our assumption, $U_1(\calE'(\mR))$ is dense in $(\calE'(\mR),\beta(\calE',\calE))$.  But
then the set $U_1(\calE'(\mR))$ is also sequentially dense: This is a consequence of 
the fact that a subset $F$ of $\calE'(\mR)$ is closed in 
$\beta(\calE',\calE)$ if and only if it is sequentially closed.  (See
\cite[Satz 3.5, p.231]{Obe}, which says that $E'$, with the $\beta (E',E)$-topology, 
is sequential whenever $E$ is Fr\'echet-Montel. A locally convex space $F$ is
{\em sequential} if any subset of $F$ is closed if and only if it is
sequentially closed. If $F$ has this property, then the closure of any
subset equals its sequential closure, and therefore being dense is the
same as being sequentially dense. In our case,
$E=\calE(\mR)$ is Fr\'echet-Montel, and so $\calE'(\mR)$ is
sequential. In fact in the remark following \cite[Satz 3.5]{Obe}, the
case of $\calE'(\mR)$ is mentioned as a corollary.)  

Thus there exists
a sequence $(T_n)_{n\in \mN}$ in $U_1(\calE'(\mR))$ such that
$T_n\stackrel{n\rightarrow\infty}{\longrightarrow} T$ in
$\calE'(\mR)$. But since each $T_n$ is invertible in $\calE'(\mR)$,
there exists a sequence $(S_n)_{n\in\mN}$ in $ \calE'(\mR)$ such that
$$
T_n\ast S_n=\delta_{\scriptscriptstyle{0}}\;\; \textrm{ for all } n\in \mN.
$$
Taking the Fourier-Laplace transform, we obtain
$$
\widehat{T_n}(z) \cdot \widehat{S_n}(z)=1 \;\;\textrm{ for all }z\in
\mC\textrm{ and all } n\in \mN.
$$
In particular, the entire functions $\widehat{T_n}$ are all zero-free. 

But as $T_n\stackrel{n\rightarrow\infty}{\longrightarrow} T$ in $\calE'(\mR)$, we now
show that $(\widehat{T_n})_{n\in \mN}$ converges to $\widehat{T}$
uniformly on compact subsets of $\mC$ as $n\rightarrow \infty$.  The
pointwise convergence of $(\widehat{T_n})_{n\in \mN}$ to $\widehat{T}$
is clear by taking the test function $x\mapsto e^{-2\pi i x z}$:
$$
\widehat{T_n}(z)=\langle T_n,e^{-2\pi iz\;\!\bigcdot }\rangle 
\stackrel{n\rightarrow\infty}{\longrightarrow} 
\langle T,e^{-2\pi iz\;\!\bigcdot }\rangle=\widehat{T}(z).
$$
Now for any $\varphi \in \calE(\mR)$, we know that the sequence
$(\langle T_n,\varphi\rangle)_{n\in \mN}$ converges to
$\langle T,\varphi\rangle$, and in particular, the set
$$
\Gamma(\varphi):=\{\langle T_n,\varphi\rangle:n\in \mN\}
$$
is bounded, for every $\varphi\in \calE(\mR)$. By the
Banach-Steinhaus Theorem for Fr\'echet spaces (see for example
\cite[Theorem 2.6, p.45]{Rud}), applied in our case to the Fr\'echet
space $\calE(\mR)$, we conclude that 
$$
\Gamma=\{T_n:n\in \mN\}
$$ 
is
equicontinuous. Thus for every $\epsilon>0$, there exists a
neighbourhood $V$ of $0$ in $\calE(\mR)$ such that
$T_n(V)\subset B(0,\epsilon):=\{z\in \mC:|z|<\epsilon\}$ for all
$n\in \mN$. From here it follows that there exist $M\in \mZ_{\scriptscriptstyle{\geq 0}}$,
$R>0$ and $C>0$ such that
$$
|\langle T_n,\varphi\rangle|\leq C\Big(1+\sup_{0\leq m\leq M} \sup_{|x|\leq R} |\varphi^{(m)}(x)|\Big) .
$$
By taking $\varphi=(x\mapsto e^{-2\pi i x z})$ in the above, we obtain 
$$
|\widehat{T_n}(z)|\leq C'(1+|z|)^{M'} e^{R' |z|},\quad z\in \mC, \;n\in \mN.
$$
Also, by the Payley-Wiener-Schwartz Theorem \cite[Theorem~4.12, p.139]{Bar} for $T\in \calE'(\mR)$, 
we have 
$$
|\widehat{T}(z)|\leq C''(1+|z|)^{M''} e^{R'' |z|},\quad z\in \mC, \;n\in \mN.
$$
It now follows that for some constants $C_*,M_*,R_*$ that  
$$
|\widehat{T_n}(z)-\widehat{T}(z)|\leq C_*(1+|z|)^{M_*} e^{R_* |z|},\quad z\in \mC, \;n\in \mN.
$$
But this means that the pointwise convergent sequence
$(\widehat{T_n})_{n\in \mN}$ of entire functions is uniformly bounded
on compact subsets of $\mC$ (that is, the sequence constitutes a
normal family).  Then it follows from Montel's Theorem (see e.g. 
\cite[Exercise~9.4, p.157]{Ull}) that $(\widehat{T_n})_{n\in \mN}$
converges to $\widehat{T}$ uniformly on compact subsets of $\mC$ as
$n\rightarrow \infty$.

But now by Hurwitz Theorem (see e.g. \cite[Exercise~5.6, p.85]{Ull}), and considering, say, the compact set
$K=\{z\in \mC:|z|\leq 1\}$, we conclude that $\widehat{T}$ must be
either be identically zero on $K$ or that it must be zero-free in $K$.
But $\widehat{T}$ is neither:
$$
\widehat{T}(z)=\frac{e^{2\pi i z}-e^{-2\pi i z}}{2i}=\sin (2\pi z),
$$
a contradiction. Hence $\tsr (\calE'(\mR))\geq 2$.
\end{proof}

\noindent An alternative proof of Proposition~\ref{prop_geq}, suggested by Peter Wagner \cite{Wag}, is as follows. 
 The theorem of supports (\cite[Theorem~4.3.3]{Hor})
implies that $U_1(\calE'(\mR))$ equals the set of nonzero multiples
of $\delta_a$ for arbitrary $a\in \mR$, and this set is not dense in $\calE'(\mR)$. We give the 
details below. First, one can show the following structure result for $U_1(\calE'(\mR))$.

\begin{proposition}
\label{prop_PW}
$U_1(\calE'(\mR))=\{c\delta_a: a\in \mR, \;0\neq c\in \mC\}$. 
\end{proposition}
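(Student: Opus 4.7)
The plan is to use the theorem of supports for convolution of compactly supported distributions to force the support of any invertible element to collapse to a single point, and then use the Fourier--Laplace transform to pin down its structure.

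First I would verify the easy inclusion: if $a\in\mR$ and $0\neq c\in\mC$, then $c\delta_a$ is invertible in $\calE'(\mR)$ with inverse $c^{-1}\delta_{-a}$, since $\delta_a*\delta_{-a}=\delta_0$ (the identity element of the algebra).

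For the reverse inclusion, suppose $T\in U_1(\calE'(\mR))$, so there exists $S\in\calE'(\mR)$ with $T*S=\delta_0$. The theorem of supports (\cite[Theorem~4.3.3]{Hor}) gives
$$
\textrm{ch}(\textrm{supp}(T*S))=\textrm{ch}(\textrm{supp}(T))+\textrm{ch}(\textrm{supp}(S)),
$$
where $\textrm{ch}$ denotes the closed convex hull. The left-hand side equals $\textrm{ch}(\{0\})=\{0\}$. On the real line, $\textrm{ch}(\textrm{supp}(T))$ and $\textrm{ch}(\textrm{supp}(S))$ are compact intervals (possibly degenerate), say $[a_1,a_2]$ and $[b_1,b_2]$, whose Minkowski sum is $[a_1+b_1,a_2+b_2]$. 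Forcing this sum to equal $\{0\}$ yields $a_1=a_2=:a$ and $b_1=b_2=-a$. Thus $\textrm{supp}(T)\subset\{a\}$ and $\textrm{supp}(S)\subset\{-a\}$.

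Now the standard structure theorem for distributions supported at a single point tells us that $T$ is a finite linear combination of $\delta_a$ and its derivatives, and similarly for $S$. Taking the Fourier--Laplace transform, $\widehat{T}(z)=e^{-2\pi i a z}P(z)$ and $\widehat{S}(z)=e^{2\pi i a z}Q(z)$ for some polynomials $P,Q\in\mC[z]$. The identity $T*S=\delta_0$ transforms to $P(z)Q(z)=1$ for all $z\in\mC$, which forces both $P$ and $Q$ to be nonzero constants. Hence $T=c\delta_a$ for some $0\neq c\in\mC$, completing the proof.

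The only real obstacle is correctly invoking the theorem of supports in the form that gives an equality of convex hulls (rather than merely the inclusion $\textrm{supp}(T*S)\subset\textrm{supp}(T)+\textrm{supp}(S)$, which would be too weak here); after that, everything reduces to elementary one-variable calculations with polynomials.
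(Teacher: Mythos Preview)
Your proof is correct and follows essentially the same route as the paper: both invoke the theorem of supports to force $\textrm{supp}(T)=\{a\}$, then use the structure theorem for point-supported distributions. The only cosmetic difference is in the final step, where the paper argues directly from the linear independence of $\{\delta_0^{(k)}\}$ that the convolution $\big(\sum t_n\delta_a^{(n)}\big)\ast\big(\sum s_m\delta_{-a}^{(m)}\big)=\delta_0$ forces $N=M=0$, whereas you pass through the Fourier--Laplace transform to reduce to $P(z)Q(z)=1$ in $\mC[z]$; these are equivalent one-line computations.
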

\begin{proof} 
It is clear that $\{c\delta_a:a\in \mR,\;0\neq c\in \mC \}\subset U_1(\calE'(\mR))$ since 
$$
(c\delta_a)\ast (c^{-1} \delta_{-a})=\delta_0.
$$
Now suppose that $T\in U_1(\calE'(\mR))$. Then there exists an $S\in \calE'(\mR)$ such that 
$T\ast S=\delta_0$. By the Theorem on Supports \cite[Theorem~4.3.3, p.107]{Hor}, we have 
 $$
\textrm{c.h.supp}(T\ast S)=\textrm{c.h.supp}(T)+\textrm{c.h.supp}(S),
$$ 
where, for a distribution  $E\in \calE'(\mR)$, the notation $\textrm{c.h.supp}(E)$ is used for the closed convex hull of 
$\textrm{supp}(E)$, that is, the intersection of all closed convex sets containing $\textrm{supp}(E)$. 
So we obtain 
$$
\{0\}=\textrm{c.h.supp}(\delta_0)= \textrm{c.h.supp}(T\ast S)=\textrm{c.h.supp}(T)+\textrm{c.h.supp}(S),
$$
from which it follows that $\textrm{c.h.supp}(T)=\{a\}$ and  
$\textrm{c.h.supp}(S)=\{-a\}$ for some $a\in \mR$. But then 
 also  $\textrm{supp}(T)=\{a\}$ and  
$\textrm{supp}(S)=\{-a\}$. As distributions with 
support in a point $p$ are linear combinations of the Dirac delta distribution  $\delta_p$ 
and its derivatives $\delta_p^{(n)}$ \cite[Theorem~24.6, p.266]{Tre}, we conclude that 
$S$ and $T$ have the form 
\begin{eqnarray*}
 T&=&\sum_{n=0}^N t_n \delta_a^{(n)},\\
 S&=&\sum_{m=0}^M s_m \delta_{-a}^{(m)}, 
\end{eqnarray*}
for some integers $N,M\geq 0$ and some complex numbers $t_n,s_m$ ($0\leq n\leq N$, $0\leq m\leq M$). 
Now $T\ast S=\delta_0$ implies that $N=M=0$ and $t_0s_0=1$, thanks to the 
 the linear independence of the set 
$$
\{\delta_{-a},\delta_{-a}',\delta_{-a}'',\cdots\}\cup\{\delta_{0},\delta_{0}',\delta_{0}'',\cdots\}\cup 
\{\delta_{a},\delta_{a}',\delta_{a}'',\cdots\}
$$
in the complex vector space $\calE'(\mR)$.  
In particular $t_0\neq 0$. 
Thus 
$$
T= t_0 \delta_a\in \{c\delta_p: p\in \mR, \;0\neq c\in \mC  \}.
$$ 
Consequently, $U_1(\calE'(\mR))=\{c\delta_a: a\in \mR, \;0\neq c\in \mC  \}$.
\end{proof}

\noindent Based on the above, we can now give the following 
alternative proof of Proposition~\ref{prop_geq}.

\begin{proof} We show $U_1(\calE'(\mR))$ is not dense in $(\calE'(\mR),\beta(\calE',\calE))$. If it were, then it would 
be sequentially dense too, and so for each element $T$ of $\calE'(\mR)$, there would exist a sequence in $U_1(\calE'(\mR))$  that converges to $T$ in the $\beta(\calE',\calE)$ topology, and hence also in the 
$\sigma(\calE',\calE)$ topology. But we now show that 
 $\delta_0'\in \calE'(\mR)$ cannot be approximated in the $\sigma(\calE',\calE)$ topology by elements from  $U_1(\calE'(\mR))=\{c\delta_a: a\in \mR, \;0\neq c\in \mC  \}$. Suppose, on the contrary, that $(c_n\delta_{a_n})_{n\in \mN}$ 
converges to $\delta_0'$ in $(\calE'(\mR),\sigma(\calE',\calE))$. 

We first note that $(a_n)_{n\in \mN}$ is bounded. For if not, then there exists a subsequence 
$(a_{n_k})_{k\in \mN}$ of $(a_n)_{n\in \mN}$ such that $|a_{n_k}|>2$ for all $k\in \mN$. Now choose a $\varphi\in \calD(\mR)$ 
such that $\varphi'(0)=1$ and $\varphi\equiv 0$ on $\mR\setminus(-1,1)$. Then we arrive at the contradiction that 
$$
0=c_{n_k}\cdot 0= c_{n_k}\cdot \varphi(a_{n_k})= \langle c_{n_k}\delta_{a_{n_k}},\varphi\rangle  \stackrel{k\rightarrow\infty}{\longrightarrow} \langle \delta_0',\varphi\rangle=
-\varphi'(0)=-1.
$$
So $(a_n)_{n\in \mN}$ is bounded. 

Next we show that $(c_n)_{n\in \mN}$ converges to $0$. Let $R>0$ be such that $|a_n|<R$ for all $n\in \mN$. 
Let $\psi\in \calD(\mR)$ be such that $\psi\equiv 1$ on $[-R,R]$. Then we have 
$$
c_n=c_n\cdot 1= c_n\cdot \psi(a_n)=\langle c_n \delta_{a_n},\psi \rangle
\stackrel{n\rightarrow\infty}{\longrightarrow} \langle \delta'_0,\psi\rangle=-\psi'(0)=0.
$$
Finally, we show that $(c_n\delta_{a_n})_{n\in \mN}$ 
converges to $0$ in $(\calE'(\mR),\sigma(\calE',\calE))$. 
For any $\chi \in \calD(\mR)$, we have 
$$
|\langle c_n\delta_{a_n},\chi\rangle|=|c_n|\cdot |\chi(a_n)|\leq |c_n|\cdot \|\chi \|_\infty\stackrel{n\rightarrow \infty}{\longrightarrow }
0\cdot \|\chi \|_\infty=0.
$$
So $(c_n\delta_{a_n})_{n\in \mN}$ 
converges to $0$ in $(\calE'(\mR),\sigma(\calE',\calE))$. But this is a contradiction, since $0\neq \delta'_0$ in $\calE'(\mR)$. Consequently, $U_1(\calE'(\mR))$ is not dense in $(\calE'(\mR),\beta(\calE',\calE))$, 
and so $\tsr(\calE'(\mR))\geq 2$.
\end{proof}

\section{$\mathrm{tsr}(\calE'(\mR))\leq 2$}
\label{section_leq_2}

\noindent The idea is to reduce the  determination of tsr$(\calE'(\mR))$ to tsr($\mC[z]$) 
of the polynomial ring $\mC[z]$ as follows. Given a pair from $\calE'(\mR)$, we use mollification to make 
a pair in $\calD(\mR)$, and then approximate 
the resulting smooth functions by 
a linear combination of Dirac distributions with uniform spacing. 
The uniform spacing affords 
the identification of the linear combination of Dirac deltas with the ring of polynomials. 

\noindent 
For $n\in \mN$, we define the collection ${\mathbf D}_{n}$ of all `finitely
supported Dirac delta combs' with spacing $1/n$ by
$$
{\mathbf D}_{n}:=\textrm{span}\;\! \{ \delta_{\scriptscriptstyle{k/n}}:k\in \mZ\},
$$
where `span' means the set of  all (finite) linear combinations. 

\begin{lemma}[Approximating a pair of Dirac combs by a {\em unimodular} pair]
\label{lemma_1}
Let $n\in \mN$ and $T,S\in {\mathbf D}_{n}$. Then there exist sequences $(T_k)_{k\in \mN}$ and
$(S_k)_{k\in \mN}$ in ${\mathbf D}_n$, which converge to $T,S$, respectively, in
$(\calE'(\mR),\sigma(\calE',\calE))$, and hence\footnote{Because $\calE'(\mR)$ is a Montel space; see \cite[Corollary~1, p.358]{Tre}.} also in $(\calE'(\mR),\beta(\calE',\calE))$,  and are such that for each $k$,
$(T_k,S_k)\in U_2(\calE'(\mR))$.
\end{lemma}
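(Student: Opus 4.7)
The plan is to reduce the unimodularity question inside $\mathbf{D}_n$ to a coprimality question for Laurent polynomials. The convolution algebra $\mathbf{D}_n$ is canonically isomorphic to the Laurent polynomial ring $\mC[w,w^{-1}]$ as a $\mC$-algebra, via the map $\Phi:\mathbf{D}_n\to\mC[w,w^{-1}]$ determined by $\Phi(\delta_{k/n})=w^k$; the relation $\delta_{k/n}\ast\delta_{l/n}=\delta_{(k+l)/n}$ makes $\Phi$ a ring isomorphism, and under the substitution $w=e^{-2\pi iz/n}$ it implements the Fourier--Laplace transform, $\widehat{T}(z)=\Phi(T)(e^{-2\pi iz/n})$.

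Next, I would use the fact that $\mC[w,w^{-1}]$ is a principal ideal domain whose irreducibles are the factors $w-a$ with $a\in \mC^*:=\mC\setminus\{0\}$. Hence two Laurent polynomials $P,Q$ generate the unit ideal if and only if they share no common zero in $\mC^*$, and in that case one can find $A,B\in \mC[w,w^{-1}]$ with $AP+BQ=1$; pulling back by $\Phi^{-1}$ produces $A',B'\in \mathbf{D}_n\subset \calE'(\mR)$ with $A'\ast T+B'\ast S=\delta_0$. Thus it is enough to approximate $(T,S)$ by pairs in $\mathbf{D}_n$ whose Laurent polynomials are coprime in $\mC[w,w^{-1}]$.

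For the perturbation step, write $P:=\Phi(T)$ and $Q:=\Phi(S)$. If $Q=0$, set $T_k:=T$ and $S_k:=\frac{1}{k}\delta_0$; then $\Phi(S_k)=1/k$ is a unit, so $(T_k,S_k)$ is already unimodular in $\mathbf{D}_n$. Otherwise $Q$ has finitely many zeros $\zeta_1,\ldots,\zeta_m\in \mC^*$, and I would choose $\epsilon_k\in \mC$ with $0<|\epsilon_k|<1/k$ avoiding the finite set $\{-P(\zeta_1),\ldots,-P(\zeta_m)\}$; setting $T_k:=T+\epsilon_k\delta_0$ and $S_k:=S$, the perturbed Laurent polynomial $P_k:=P+\epsilon_k$ satisfies $P_k(\zeta_i)\neq 0$ for every $i$, so $P_k$ and $Q$ are coprime in $\mC[w,w^{-1}]$. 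Convergence is immediate: $\langle T_k-T,\varphi\rangle=\epsilon_k\varphi(0)\to 0$ for every $\varphi\in \calE(\mR)$, giving $T_k\to T$ in $\sigma(\calE',\calE)$ and hence in $\beta(\calE',\calE)$ by the footnote, with the $S_k$ side handled analogously. The one point requiring care is checking that $\Phi$ is genuinely a ring isomorphism onto $\mC[w,w^{-1}]$ (so that the Bezout identity for $P_k,Q$ in $\mC[w,w^{-1}]$ really returns an inverse pair in $\mathbf{D}_n$, and therefore in $\calE'(\mR)$); once that algebraic identification is in hand, the construction has no analytic obstacle.
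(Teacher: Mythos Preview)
Your proposal is correct and follows the same overall strategy as the paper: identify the convolution algebra $\mathbf{D}_n$ with the Laurent polynomial ring $\mC[w,w^{-1}]$, reduce unimodularity to coprimality there, and pull a B\'ezout identity back to $\mathbf{D}_n\subset\calE'(\mR)$. The only substantive difference is in the perturbation step. The paper associates to $T,S$ ordinary polynomials $p_T,p_S\in\mC[z]$ (by a shift of indices), factorises them and nudges the roots of one so as to destroy any common zeros, then shifts back to $\mC[z,z^{-1}]$ before applying the ring map into $\mathbf{D}_n$. Your route is more economical: you work in $\mC[w,w^{-1}]$ from the start, use that it is a PID with irreducibles $w-a$ ($a\in\mC^*$), and perturb $P$ by a single constant $\epsilon_k$ chosen outside the finite set $\{-P(\zeta_i)\}$. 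This sidesteps the root-factorisation argument entirely, and your explicit handling of the degenerate case $Q=0$ is cleaner than the paper's sketch (where, strictly speaking, a zero polynomial has no roots to perturb). One small point you should add for completeness: the implication ``no common zero in $\mC^*$ $\Rightarrow$ coprime in $\mC[w,w^{-1}]$'' requires both elements to be nonzero, so you should note that $P_k=P+\epsilon_k\neq 0$; this is automatic unless $P$ is itself a constant, and in that case it is guaranteed once $|\epsilon_k|$ is small enough (or simply by adding the single value $-P$ to the finite set of forbidden $\epsilon_k$).
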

\begin{proof}
Write 
$\displaystyle 
T=\sum_{\ell=-L}^{L}t_{\scriptscriptstyle{\ell}} \delta_{\scriptscriptstyle{\ell/n}}$, and 
 $\displaystyle  
S=\sum_{\ell=-L}^{L}s_{\scriptscriptstyle{\ell}} \delta_{\scriptscriptstyle{\ell/n}},
$ for some $L\in \mN$, $t_{\scriptscriptstyle{\ell}},s_{\scriptscriptstyle{\ell}} \in \mC$.

\noindent 
Define 
\begin{eqnarray*}
 p_{\scriptscriptstyle{T}}&:=&t_{\scriptscriptstyle{-L}}+t_{\scriptscriptstyle{-L+1}} z+\cdots +t_{\scriptscriptstyle{L}} z^{2L},\\
 p_{\scriptscriptstyle{S}}&:=&s_{\scriptscriptstyle{-L}}+s_{\scriptscriptstyle{-L+1}} z+\cdots +s_{\scriptscriptstyle{L}} z^{2L}.
\end{eqnarray*}
For a given $k\in \mN$, let $\epsilon=1/(2^k \cdot 2L)>0$. Then we can perturb the coefficients of the
polynomials $p_{\scriptscriptstyle{T}}, p_{\scriptscriptstyle{S}}$ within a distance of $\epsilon$ to make them
have no common zeros, that is after perturbation of coefficients they
are coprime in the ring $\mC[z]$. Indeed any polynomial $p_{\scriptscriptstyle{T}}, p_{\scriptscriptstyle{S}}$
can be factorized as
$$
p_{\scriptscriptstyle{T}}=C \prod (z-\alpha_{\scriptscriptstyle{\ell}}), \quad 
p_{\scriptscriptstyle{S}}=C'\prod (z-\beta_{\scriptscriptstyle{\ell}}),
$$
and if there is some common zero $\alpha_{\scriptscriptstyle{\ell}}=\beta_{\scriptscriptstyle{\ell'}}$, we simply
replace $\beta_{\scriptscriptstyle{\ell'}}$ by $\beta_{\scriptscriptstyle{\ell'}}+\epsilon'$ with an $\epsilon'$
small enough so that the final coefficients (of this new perturbed polynomial obtained from $p_{\scriptscriptstyle{S}}$), which are polynomial
functions of the zeros, lie within the desired
$\epsilon$ distance of the coefficients of $p_{\scriptscriptstyle{S}}$.  
So we can choose
$\widetilde{t}_{\scriptscriptstyle{-L,k}},\cdots ,\widetilde{t}_{\scriptscriptstyle{L,k}}$ and
$\widetilde{s}_{\scriptscriptstyle{-L,k}},\cdots ,\widetilde{s}_{\scriptscriptstyle{L,k}}$ such that for all 
$\ell=-L,\cdots, L$, we have
$$
|t_{\scriptscriptstyle{\ell}}-\widetilde{t}_{\scriptscriptstyle{\ell,k}}|
<
\frac{1}{2^k}\cdot \frac{1}{2L} \quad \textrm{ and } \quad 
|s_{\scriptscriptstyle{\ell}}-\widetilde{s}_{\scriptscriptstyle{\ell,k}}|
<
\frac{1}{2^k}\cdot \frac{1}{2L} , 
$$
and so that 
\begin{eqnarray*}
 \widetilde{p}_{\scriptscriptstyle{T,k}}
 &:=&
 \widetilde{t}_{\scriptscriptstyle{-L,k}}+\widetilde{t}_{\scriptscriptstyle{-L+1,k}} z+
 \cdots +\widetilde{t}_{\scriptscriptstyle{L,k}} z^{2L},\\
 \widetilde{p}_{\scriptscriptstyle{S,k}}
 &:=&
 \widetilde{s}_{\scriptscriptstyle{-L,k}}+\widetilde{s}_{\scriptscriptstyle{-L+1,k}} z+
 \cdots +\widetilde{s}_{\scriptscriptstyle{L,k}} z^{2L}
\end{eqnarray*}
have no common zeros. Thus $\widetilde{p}_{\scriptscriptstyle{T,k}},\widetilde{p}_{\scriptscriptstyle{S,k}}$
are coprime in $\mC[z]$, and hence there exist polynomials
$q_{\scriptscriptstyle{T,k}},q_{\scriptscriptstyle{S,k}}\in \mC[z]$ (\cite[Corollary~8.5, p.374]{Art}) such
that
$$
\widetilde{p}_{\scriptscriptstyle{T,k}}\cdot q_{\scriptscriptstyle{T,k}}
+\widetilde{p}_{\scriptscriptstyle{S,k}} \cdot q_{\scriptscriptstyle{S,k}}=1.
$$
Set $Q_{\scriptscriptstyle{T,k}}:=z^L q_{\scriptscriptstyle{T,k}}$ and 
$Q_{\scriptscriptstyle{S,k}}:=z^L q_{\scriptscriptstyle{S,k}}$, and 
\begin{eqnarray*}
 P_{\scriptscriptstyle{T,k}}
 &:=&
 \widetilde{t}_{\scriptscriptstyle{-L,k}}z^{-L}+\widetilde{t}_{\scriptscriptstyle{-L+1,k}} z^{-L+1}
 +\cdots +\widetilde{t}_{\scriptscriptstyle{L,k}} z^{L},\\
 P_{\scriptscriptstyle{S,k}}
 &:=&
 \widetilde{s}_{\scriptscriptstyle{-L,k}}z^{-L}+\widetilde{s}_{\scriptscriptstyle{-L+1,k}} z^{-L+1}
 +\cdots +\widetilde{s}_{\scriptscriptstyle{L,k}} z^{L}.
\end{eqnarray*}
Then in the ring $\mC[z,z^{-1}]$ of linear combinations of monomials $z^n$, where $n\in \mZ$ (i.e. the Laurent polynomial ring $\mC[z,z^{-1}]
=\mC[z,w]/\langle zw-1 \rangle$; see for example \cite[p.367]{Art}), we have
\begin{equation}
 \label{Bezout_27_10_2018_11:41}
 P_{\scriptscriptstyle{T,k}}\cdot  Q_{\scriptscriptstyle{T,k}}+ P_{\scriptscriptstyle{S,k}}\cdot Q_{\scriptscriptstyle{S,k}}=1.
\end{equation}
Suppose that $Q_{\scriptscriptstyle{T,k}}$ and $Q_{\scriptscriptstyle{S,k}}$ have the expansions
\begin{eqnarray*}
 Q_{\scriptscriptstyle{T,k}}
 &=&
 \tau_{\scriptscriptstyle{L',k}}z^{L+L'}+\tau_{\scriptscriptstyle{L'-1,k}} z^{L+L'-1}+\cdots 
 +\tau_{\scriptscriptstyle{0,k}} z^{L},\\
 Q_{\scriptscriptstyle{S,k}}
 &=&
 \sigma_{\scriptscriptstyle{L',k}}z^{L+L'}+\sigma_{\scriptscriptstyle{L'-1,k}} z^{L+L'-1}+\cdots 
 +\sigma_{\scriptscriptstyle{0,k}} z^{L}.
\end{eqnarray*}
Finally, set 
$$
T_k:=\sum_{\ell=-L}^{L}\widetilde{t}_{\scriptscriptstyle{\ell,k}} \delta_{\scriptscriptstyle{\ell/n}}, \quad 
S_k:=\sum_{\ell=-L}^{L}\widetilde{s}_{\scriptscriptstyle{\ell,k}} \delta_{\scriptscriptstyle{\ell/n}},
$$
and 
\begin{eqnarray*}
 U_{k}
 &:=&
 \tau_{\scriptscriptstyle{L',k}}\delta_{\scriptscriptstyle{(L+L')/n}}+\tau_{\scriptscriptstyle{L'-1,k}} 
 \delta_{\scriptscriptstyle{(L+L'-1)/n}}+\cdots +\tau_{\scriptscriptstyle{0,k}} \delta_{\scriptscriptstyle{L/n}},\\
 V_{k}
 &:=&
 \sigma_{\scriptscriptstyle{L',k}}\delta_{\scriptscriptstyle{(L+L')/n}}+\sigma_{\scriptscriptstyle{L'-1,k}} 
 \delta_{\scriptscriptstyle{(L+L'-1)/n}}+\cdots +\sigma_{\scriptscriptstyle{0,k}} \delta_{\scriptscriptstyle{L/n}}.
\end{eqnarray*}
Then it follows from \eqref{Bezout_27_10_2018_11:41} that 
\begin{equation}
 \label{Bezout_5_10_2018_12:46}
T_k\ast U_k +S_k \ast V_k=\delta_{\scriptscriptstyle{0}}.
\end{equation}
To see this, we note that $\Phi:\mC[z,z^{-1}]\rightarrow {\mathbf{D}}_n$ given by 
\begin{eqnarray*}
\Phi(z)=\delta_{\scriptscriptstyle 1/n}&\textrm{ and }&\Phi(1)=\delta_{\scriptscriptstyle 0}
\end{eqnarray*}
defines a ring homomorphism, and then \eqref{Bezout_5_10_2018_12:46} above follows by 
applying $\Phi$ on both sides of \eqref{Bezout_27_10_2018_11:41}.
Hence $(T_k,U_k)\in U_2(\calE'(\mR))$. Also, for any $\varphi \in \calE(\mR)$, we have 
\begin{eqnarray*}
\Big|\big\langle (T-T_k),\varphi\big\rangle\Big|
&=&
\left|\sum_{\ell=-L}^{L}(t_{\scriptscriptstyle{\ell}}-\widetilde{t}_{\scriptscriptstyle{\ell,k}}) 
\langle \delta_{\scriptscriptstyle{\ell/n}},\varphi\rangle\right|
\\
&=&\frac{1}{2^k} \cdot \frac{1}{2L}\cdot  2L\cdot  \sup_{x\in [-
\frac{L}{n},\frac{L}{n}]}|\varphi(x)|\\
&=&
\frac{1}{2^k}  \cdot \sup_{x\in [-
\frac{L}{n},\frac{L}{n}]}|\varphi(x)|
\stackrel{k\rightarrow \infty}{\longrightarrow }0.
\end{eqnarray*}
Hence $T_k\stackrel{k\rightarrow \infty}{\longrightarrow }T$ in
$(\calE'(\mR),\sigma(\calE',\calE))$ as $k\rightarrow \infty$. But then this convergence also is 
valid in $(\calE'(\mR),\beta(\calE',\calE))$,  by \cite[Corollary~1, p.358]{Tre}, since $\calE'(\mR)$ is a Montel space.  
Similarly, $S_k\stackrel{k\rightarrow \infty}{\longrightarrow }S$ in
$(\calE'(\mR),\sigma(\calE',\calE))$ as $k\rightarrow \infty$, and again, the convergence holds in $(\calE'(\mR),\beta(\calE',\calE))$.  This completes the proof.
\end{proof}

\begin{lemma}[Approximation in $\calE'(\mR)$ by Dirac combs]
\label{lemma_2}$\;$

\noindent 
Let $T\in \calE'(\mR)$. Then there exists a sequence
$(T_n)_{n\in \mN}$ such that
\begin{itemize}
\item for all $n\in \mN$, $T_n\in {\mathbf D}_n$, and
\item $T_n\stackrel{n\rightarrow \infty}{\longrightarrow} T$ in
  $(\calE'(\mR),\sigma(\calE',\calE))$, and hence also in $(\calE'(\mR),\beta(\calE',\calE))$.
 \end{itemize}
\end{lemma}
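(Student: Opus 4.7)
The plan is to combine mollification, which replaces $T$ by a smooth compactly supported function $g_n$, with a Riemann-sum sampling of $g_n$ on the grid $\{k/n:k\in\mZ\}$, carefully coupling the mollifier scale $\epsilon_n$ to the sampling step $1/n$ so that the two resulting errors vanish jointly.

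Since $T\in\calE'(\mR)$, there exist $R>0$, an integer $M\geq 0$, and $C_T>0$ such that
\[
|\langle T,\psi\rangle|\;\leq\; C_T\sup_{0\leq m\leq M}\sup_{|y|\leq R}|\psi^{(m)}(y)|\qquad(\psi\in\calE(\mR)).
\]
Fix a nonnegative even $\rho\in\calD(\mR)$ with $\int\rho=1$ and $\mathrm{supp}\,\rho\subset[-1,1]$, and set $\rho_{\epsilon}(x):=\epsilon^{-1}\rho(x/\epsilon)$. For each $n\in\mN$ choose $\epsilon_n:=n^{-1/(M+3)}$, let $g_n:=T\ast\rho_{\epsilon_n}\in\calD(\mR)$, and define
\[
T_n\;:=\;\frac{1}{n}\sum_{k\in\mZ}g_n(k/n)\,\delta_{k/n}.
\]
Because $\mathrm{supp}\,g_n\subset[-R-\epsilon_n,R+\epsilon_n]\subset[-R-1,R+1]$ for large $n$, the sum is finite, and hence $T_n\in\mathbf{D}_n$.

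I would then verify $T_n\to T$ in $\sigma(\calE',\calE)$ by the splitting, for $\varphi\in\calE(\mR)$,
\[
\langle T-T_n,\varphi\rangle\;=\;\bigl(\langle T,\varphi\rangle-\langle g_n,\varphi\rangle\bigr)\;+\;\Bigl(\int_{\mR} g_n(x)\varphi(x)\,dx-\tfrac{1}{n}\sum_{k}g_n(k/n)\varphi(k/n)\Bigr).
\]
The first term tends to $0$ by standard mollification: evenness of $\rho$ gives $\langle g_n,\varphi\rangle=\langle T,\rho_{\epsilon_n}\ast\varphi\rangle$, and $\rho_{\epsilon_n}\ast\varphi\to\varphi$ in $\calE(\mR)$ (since $\partial^j(\rho_{\epsilon_n}\ast\varphi)=\rho_{\epsilon_n}\ast\partial^j\varphi$ converges to $\partial^j\varphi$ uniformly on compacta), so continuity of $T$ yields the claim. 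The second term is a Riemann-sum error for the continuous compactly supported function $g_n\varphi$, bounded by $(2R+3)\,n^{-1}\,\|(g_n\varphi)'\|_\infty$. Applying the seminorm bound to $y\mapsto\rho_{\epsilon_n}^{(j)}(x-y)$ yields $\|g_n^{(j)}\|_\infty\leq A_j\,\epsilon_n^{-(1+M+j)}$, so the Riemann error is $O(n^{-1}\epsilon_n^{-(M+2)})=O(n^{-1/(M+3)})\to 0$. Sequential $\sigma(\calE',\calE)$-convergence upgrades to $\beta(\calE',\calE)$-convergence by the Montel property invoked already in Lemma~\ref{lemma_1}.

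The principal obstacle is the tension between the two approximations: driving $\epsilon_n\to 0$ is needed to make the mollification error small, but this simultaneously inflates $\|g_n^{(j)}\|_\infty$ at a rate controlled by the order $M$ of $T$, worsening the Riemann-sum error. The polynomial choice $\epsilon_n=n^{-1/(M+3)}$ is calibrated precisely so that $n\,\epsilon_n^{M+2}\to\infty$, i.e.\ the sampling step $1/n$ shrinks faster than the $(M+2)$-th power of the mollifier scale, which is exactly what is required for the two errors to vanish together.
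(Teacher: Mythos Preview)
Your argument is correct. The key estimates check out: the seminorm bound for $T$ indeed yields $\|g_n^{(j)}\|_\infty\leq A_j\,\epsilon_n^{-(1+M+j)}$, and with $\epsilon_n=n^{-1/(M+3)}$ the Riemann-sum error is $O\bigl(n^{-1}\epsilon_n^{-(M+2)}\bigr)=O(n^{-1/(M+3)})$, while the mollification error vanishes because $\rho_{\epsilon_n}\ast\varphi\to\varphi$ in $\calE(\mR)$.

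Your route differs from the paper's in a meaningful way. The paper separates the two approximations: first mollify to $f_m=T\ast\varphi_m$ (with $f_m\to T$ as $m\to\infty$), then for each fixed $m$ approximate $f_m$ by a Riemann-sum comb $T_{m,n}\in\mathbf{D}_n$ (with $T_{m,n}\to f_m$ as $n\to\infty$). This is qualitatively simpler---no rate bookkeeping is needed, since for fixed $m$ the function $f_m$ is a single smooth compactly supported function and the Riemann-sum convergence is elementary---but it produces a double sequence and leaves the extraction of a single diagonal sequence $T_n\in\mathbf{D}_n$ implicit (which is harmless for the application in Proposition~\ref{prop_leq}, where only density of $\bigcup_n\mathbf{D}_n$ is actually used). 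Your approach couples the mollifier scale to the mesh size from the outset, trading simplicity for a direct construction of the sequence exactly as the lemma states; the price is the quantitative balancing of the two error terms, which you have handled correctly by calibrating $\epsilon_n$ against the order $M$ of $T$.
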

\begin{proof} Let $k\in \mN$ be such that the support of $T$ is
  contained in $(-k,k)$. We first produce a mollified
  approximating sequence for $T$. Let $\varphi:\mR\rightarrow [0,\infty)$ be any  test
  function in $\calD(\mR)$ with support in $[-a,a]$ for some
  $a>0$, and such that 
  $$
  \int_\mR \varphi(x) dx =1.
  $$
  Then we know that if we define
 $
\varphi_m(x):=m\cdot \varphi(mx)$ ($m\in \mN$),  
then for each $m$, 
$$
f_m:=T\ast \varphi_m
$$ 
is a smooth function having
a compact support, and moreover,
$$
T\ast \varphi_m\stackrel{m\rightarrow \infty}{\longrightarrow} T
$$ 
in $(\calE'(\mR),\beta(\calE',\calE))$; see for example \cite[Theorem 3.3,
p.97]{Bar}. So the convergence is also valid in $(\calE'(\mR), \sigma(\calE',\calE))$. Moreover, as the support of $ f_m= T\ast \varphi_m$ is
contained in the sum of the supports of $\varphi_m$ and of $T$, 
for all $m$ large enough, say $m\geq M$, we have 
\begin{eqnarray*}
\textrm{supp}(T\ast \varphi_m)
&\subset &\textrm{supp}(T)+\textrm{supp}(\varphi_m)\\
&\subset &\textrm{supp}(T)+[-a/m,a/m]\\
&\subset& [-k,k].
\end{eqnarray*}
From now on, we will assume that $m\geq M$, so that
$\textrm{supp}(f_m)\subset [-k,k]$.  Now we will approximate $f_m$ by
Dirac comb elements. To this end, we define
$$
T_{m, n}:= \sum_{\ell=0}^{n-1} \frac{2k}{n} \cdot f_m\Big(-k+\frac{2k}{n} \ell\Big) \cdot 
\delta_{\scriptscriptstyle{-k+\frac{2k}{n}\ell}}\;\; \in {\mathbf D}_n.
$$
We will show that
$T_{m,n}\stackrel{n\rightarrow\infty}{\longrightarrow} f_m$ in
$(\calE'(\mR), \sigma(\calE',\calE))$.  Let $\psi\in \calE(\mR)$. Then
\begin{eqnarray*}
 \langle T_{m,n},\psi\rangle 
 &=& 
 \left\langle \sum_{\ell=0}^{n-1} \frac{2k}{n} \cdot f_m\Big(-k+\frac{2k}{n} \ell\Big) \cdot
\delta_{\scriptscriptstyle{-k+\frac{2k}{n}\ell}},\; \psi\right\rangle \\
&=& \sum_{\ell=0}^{n-1} \frac{2k}{n}\cdot  f_m\Big(-k+\frac{2k}{n} \ell\Big) 
\left\langle\delta_{\scriptscriptstyle{-k+\frac{2k}{n}\ell}},\;\psi\right\rangle\\
&=& 
\sum_{\ell=0}^{n-1} \frac{2k}{n} \cdot f_m\Big(-k+\frac{2k}{n} \ell\Big)\cdot 
\psi\Big( -k+\frac{2k}{n}\ell\Big).
\end{eqnarray*}
Thus $ \langle T_{m,n},\psi\rangle $ gives a Riemann sum for the
integral of the continuous function $f_m\psi$ with compact support
contained in $[-k,k]$, giving
\begin{eqnarray*}
&& \big|\langle T_{m,n},\psi\rangle -\langle f_{m},\psi\rangle\big|
\\
&=& \!\!\!
 \left|
\sum_{\ell=0}^{n-1} \frac{2k}{n} \!\cdot \!f_m\Big(\!-\!k\!+\!\frac{2k}{n} \ell\Big)\!\cdot \!
\psi\Big( \!-\!k\!+\!\frac{2k}{n}\ell\Big)
\!-\!
\int_{-k}^k \!f_m(x) \psi (x) dx\right|\stackrel{n\rightarrow\infty}{\longrightarrow} 0. 
\end{eqnarray*}
Hence $T_{m,n}\stackrel{n\rightarrow\infty}{\longrightarrow} f_m$ in
$(\calE'(\mR),\sigma(\calE',\calE))$. As $\calE'(\mR)$ is a Montel space, this convergence is also valid in $(\calE'(\mR),\beta(\calE',\calE))$, and the proof is completed.
\end{proof}

\begin{proposition}
\label{prop_leq}
 $\textrm{\em tsr}(\calE'(\mR))\leq 2$.
\end{proposition}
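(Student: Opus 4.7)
The plan is to deduce the proposition directly from Lemmas~\ref{lemma_1} and \ref{lemma_2} by a two-stage approximation argument. Given an arbitrary pair $(T,S)\in \calE'(\mR)^2$ and an arbitrary basic product neighbourhood of $(T,S)$ in the product $\beta(\calE',\calE)$-topology---say $U\times V$, where $U,V$ are open neighbourhoods of $T,S$ respectively---the aim is to exhibit a unimodular pair lying in $U\times V$.

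First I would apply Lemma~\ref{lemma_2} separately to $T$ and to $S$, which furnishes, for each $n\in\mN$, elements $T'_n, S'_n\in \mathbf{D}_n$ with $T'_n\to T$ and $S'_n\to S$ in $(\calE'(\mR),\beta(\calE',\calE))$. Choosing a single $n$ large enough---valid because both approximations are indexed by the same sequence of subspaces $\{\mathbf{D}_n\}_{n\in\mN}$---we can arrange $T'_n\in U$ and $S'_n\in V$. The key point is that $T'_n$ and $S'_n$ then lie in the \emph{same} subspace $\mathbf{D}_n$ of finitely supported Dirac combs with common spacing $1/n$; this alignment of spacings is precisely what enables the second stage.

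Next, I would apply Lemma~\ref{lemma_1} to the pair $(T'_n, S'_n)\in \mathbf{D}_n\times \mathbf{D}_n$, producing sequences $(T''_k)_{k\in\mN}, (S''_k)_{k\in\mN}$ in $\mathbf{D}_n$ with $(T''_k, S''_k)\in U_2(\calE'(\mR))$ for every $k$, and with $T''_k\to T'_n$ and $S''_k\to S'_n$ in $\beta(\calE',\calE)$. Since $U$ and $V$ are neighbourhoods of $T'_n$ and $S'_n$ respectively, for all sufficiently large $k$ we obtain $T''_k\in U$ and $S''_k\in V$, so that $(T''_k, S''_k)\in U_2(\calE'(\mR))\cap (U\times V)$. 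This establishes density of $U_2(\calE'(\mR))$ in $\calE'(\mR)^2$ and hence $\tsr(\calE'(\mR))\leq 2$.

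I do not anticipate serious obstacles at this stage, as the technical content is already packaged into the two preceding lemmas: Lemma~\ref{lemma_1} does the algebraic work of reducing to coprimeness in $\mC[z,z^{-1}]$ via the ring homomorphism $\Phi$, and Lemma~\ref{lemma_2} does the analytic work of mollification followed by a Riemann-sum discretisation. The only point that warrants care here is that both lemmas deliver convergence in the same $\beta(\calE',\calE)$-topology, so that the seminorm estimates from the two stages combine additively via the triangle inequality without any topology-switching. Combined with Proposition~\ref{prop_geq}, this will conclude the proof of Theorem~\ref{main_thm}.
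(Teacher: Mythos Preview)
Your proposal is correct and follows essentially the same two-stage approximation argument as the paper: first invoke Lemma~\ref{lemma_2} to land in some $\mathbf{D}_n\times\mathbf{D}_n$, then invoke Lemma~\ref{lemma_1} to pass to a nearby unimodular pair. The only cosmetic difference is that you work with basic product neighbourhoods $U\times V$ whereas the paper takes a general neighbourhood of $(T,S)$ in $(\calE'(\mR))^2$, but since such products form a base for the product topology this changes nothing.
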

\begin{proof}
  Let $T,S\in \calE'(\mR)$. Throughout this proof, $\calE'(\mR)$ is endowed with the strong dual topology $\beta(\calE',\calE)$, and then $(\calE'(\mR))^2=\calE'(\mR)\times \calE'(\mR)$ is equipped the product topology.
  Let $V$ be a neighbourhood of $(T,S)$ in 
  $(\calE'(\mR))^2$. By Lemma \ref{lemma_2}, it follows that 
  $
  \bigcup\limits_{n\in \mN} ({\mathbf D}_n\times {\mathbf D}_n)
  $ 
  is sequentially dense, and hence dense, in $(\calE'(\mR))^2$. 
  Thus there exists a pair 
  $(T_*,S_*)\in V\cap ({\mathbf D}_n\times {\mathbf D}_n)$ for some $n\in \mN$.  By Lemma~\ref{lemma_1}, there exists a 
  sequence $(T_k,S_k)_{k\in \mN}$ in 
  $({\mathbf D}_n\times {\mathbf D}_n)\cap U_2(\calE'(\mR))$ that converges to $(T_*,S_*)$ in $(\calE'(\mR))^2$. 
  Since $V$ is also a neighbourhood of 
  $(T_*,S_*)$ in $(\calE'(\mR))^2$,  there exists an index $K$ large enough so that 
  for all $k>K$, $(T_k,S_k)\in V$. 
  Consequently, $U_2(\calE'(\mR))$ is 
  dense in $(\calE'(\mR))^2$. 
\end{proof}

\begin{proof}[Proof of Theorem~\ref{main_thm}]
It follows from Propositions~\ref{prop_geq} and \ref{prop_leq} that 
 the topological stable rank of $(\calE'(\mR), +,\cdot,\ast, \beta(\calE',\calE))$ is equal to $2$. 
\end{proof}

\begin{remarks}$\;$

\medskip 

\noindent (1)  From the proofs, it is clear that we have shown that 
$U_1(\calE'(\mR))$ is not dense in $(\calE'(\mR),\sigma(\calE',\calE))$, while 
$U_2(\calE'(\mR))$ is sequentially dense, and hence dense, in 
$(\calE'(\mR))^2$ endowed with the product topology with $\calE'(\mR)$ bearing the $\sigma(\calE',\calE)$ topology. 

However, we note that $\ast:\calE'(\mR)\times \calE'(\mR)\rightarrow \calE'(\mR)$ is not continuous if we use the $\sigma(\calE',\calE)$ topology on $\calE'(\mR)$: For example, in $(\calE'(\mR),\sigma(\calE',\calE))$, we have that 
$
\delta_{\pm n}\stackrel{n\rightarrow\infty }{\longrightarrow} 0, 
$ 
so that in the product topology on $(\calE'(\mR))^2$, we have 
 $
(\delta_n,\delta_{-n})\stackrel{n\rightarrow\infty }{\longrightarrow} (0,0)
$. But $\delta_n\ast \delta_{-n}=\delta_{n-n}=
\delta_0\stackrel{n\rightarrow\infty }{\longrightarrow} \delta_0\neq 0=0\ast 0$. So $(\calE'(\mR), +,\cdot,\ast,\sigma(\calE',\calE))$ is 
not a topological algebra in the sense of our Definition~\ref{def_top_alg}. 
 
\medskip 
 
\noindent (2) We remark that in higher dimensions, with a similar analysis, it can be shown  that $\tsr(\calE'(\mR^d))\leq d+1$. 

\medskip 
 
\noindent (3) The Bass stable rank (a notion from algebraic $K$-theory, recalled below) 
of $\calE'(\mR)$ is not known. 

If $\calA$ is a commutative unital algebra, then 
 $(a_1,\dots,a_n,b)\in  U_{n+1}(\calA)$ is  called {\it reducible} if there exists an $n$-tuple 
 $(\alpha_1,\cdots,\alpha_n)\in \calA^n$ such that we have 
  $
 (a_1+\alpha_1 b,\cdots, a_n+\alpha_n b)\in U_n(\calA).
 $ 
 It can be seen that if every element of $U_{n+1}(\calA)$ is reducible, then every element of  
 $U_{n+2}(\calA)$ is reducible too. The {\em Bass stable rank} of $\calA$, denoted by $\textrm{bsr} \;\!\calA$, 
 is the smallest $n\in \mN$ such that every element in $U_{n+1}(\calA)$ is reducible, and if 
  no such $n$ exists, then $\textrm{bsr} \;\!\calA:=\infty$. It is known that for commutative unital Banach algebras $\calA$, 
 $\textrm{bsr}\;\! \calA\leq \tsr \;\!\calA$ \cite[Theorem~3]{CorLar}. But the validity of such an inequality in the 
 context of topological algebras does not seem to be known. 
 We conjecture that  $\textrm{bsr}(\calE'(\mR))=2$.

 \goodbreak
 
\noindent (4) 
There 
are also several other natural convolution algebras of distributions on $\mR$, for example
\begin{eqnarray*}
 \calD_{\scriptscriptstyle{\geq \bigcdot}}'(\mR)&:=&\{T\in \calD'(\mR): \textrm{supp}(T) \textrm{ is bounded on the left}\}, \\
 \calD_{\scriptscriptstyle{\geq 0}}'(\mR)&:=&\{T\in \calD'(\mR): \textrm{supp}(T)\subset[0,\infty)\},
\end{eqnarray*}
 and we leave the determination of the stable ranks  
of these algebras as open questions. 

\medskip 
 
\noindent (5) \cite[Corollary~3.1]{MaaSas} gives a `corona-type' pointwise condition for coprimeness in $\calE'(\mR)$, reminiscent of the famous Carleson corona condition\footnote{The Hardy algebra $H^\infty(\mD)$ is the Banach algebra of all bounded and holomorphic functions on the unit disk $\mD:=\{z\in \mC:|z|<1\}$. The Carleson Corona Theorem \cite{Car} says that $(f_1,f_2)\in U_2(H^\infty(\mD))$ if and only if there exists a $\delta>0$ such that for all $z\in \mD$, $|f_1(z)|+|f_2(z)|>\delta$.} of coprimeness in the Banach algebra $H^\infty(\mD)$:

 \smallskip 
 
 \noindent $T_1, T_2\in U_2(\calE'(\mR))$ if and only if 
 there exist positive $C,N,M$ such that 
 $
 \textrm{ for all }z\in \mC,\;\;
 |\widehat{T_1}(z)|+|\widehat{T_2}(z)|\geq C(1+|z|^2)^{-N} e^{-M|\textrm{Im}(z)|}.
 $
\end{remarks}

\medskip 

\noindent {\bf Acknowledgements:} I thank Professor Michael Kunzinger
(University of Vienna) for answering my query on whether density in
$\calE'$ implies sequential density, for the reference \cite{Obe}, and for 
 useful comments. I also thank Professor Peter Wagner (University of Innsbruck) 
 for showing me the alternative approach to establishing Proposition~\ref{prop_geq}, 
 which we have included in this article (Proposition~\ref{prop_PW}).

\noindent

\end{document}